\documentclass[a4paper,10pt]{article}
\usepackage[russian,english]{babel}
\usepackage[cp1251]{inputenc}
\usepackage[left=44mm,right=40mm,top=60mm,bottom=52mm]{geometry}

\usepackage{graphicx}
\usepackage{epsfig}
\usepackage{amsthm}
\usepackage{amsmath}
\usepackage{amsfonts}
\usepackage{amssymb}
\usepackage{amscd}

\newtheorem{theorem}{Theorem}

\newtheorem{definition}{Definition}
\newtheorem{lemma}{Lemma}

\newtheorem{remark}{Remark}
\usepackage{mathrsfs} % красивый рукописный шрифт

\begin{document}
\title{About ergodicity and polynomial convergence rate of Generalized Markov modulated Poisson processes\thanks{The work is supported by RFBR, project No~20-01-00575 A.}}
%
%\titlerunning{On Generalized Markov modulated Poisson processes}
% If the paper title is too long for the running head, you can set
% an abbreviated paper title here
%
\author{Galina Zverkina\footnote{
V. A. Trapeznikov Institute of Control Sciences of Russian Academy of Sciences,
65 Profsoyuznaya street, Moscow 117997, Russia}}

\maketitle % typeset the header of the contribution
\begin{abstract}
Generalization of the Lorden's inequality is an excellent tool for obtaining strong upper bounds for the convergence rate for various complicated stochastic models.
This paper demonstrates a method for obtaining such bounds for some generalization of the Markov modulated Poisson process (MMPP).
The proposed method can be applied in the reliability and  queuing theory.

\noindent {\bf keywords:}
{Markov modulated Poisson process \and Lorden's inequality \and Convergence rate \and Strong upper bounds \and Coupling method \and Successful coupling.}
\end{abstract}
\section{Introduction}
As well-known, a Markov modulated Poisson Process (MMPP) is a Poisson process whose rate varies according to some Markov process.
There is some Markov process  $X_t$, and the behaviour of the other process $Y_t$ at the time $t$ depends on the state of $X_t$ at this time.
Because Poisson process are considered, the processes $X_t$ and $Y_t$ are Markov chains in continuous time, obviously.

Thus, the ``rate'' or {\it intensity} and transition function of the process $Y_t$ depends on the state of $X_t$.
The paired process $(X_t,Y_t)$ is Markov, and it has the constant intensities between the changes of the state of $(X_t,Y_t)$.
This is a multi-dimensional Markov chain in continuous time.

Obviously, the processes $X_t$ and $Y_t$ are one-dimensional, but they may be multi-dimensional.

 The behavior of a large class of queueing systems, reliability systems, networks may be described in terms of MMPP under the assumption about exponential distribution of all random variables (r.v.'s).

Here, the generalization of MMPP is given; we suppose that random variables in the model can have arbitrary distributions.
Moreover, we suppose, that the processes $X_t$ and $Y_t$ can be multi-dimensional, and the intensities of everyone components of the paired process $(X_t,Y_t)$ can be depended on everyone.
Such a process describes a wide class of models of queueing systems, reliability systems, and networks.

So, we consider the process $X_t=(X_t^{(1)}, X_t^{(2)}, X_t^{(3)}, \ldots X_t^{(n)})$, where the processes $X_t^{(i)}$ are one-dimensional Markov chains in continuous time.
For simplicity, we suppose that all these Markov chains in continuous time have only one state of embedded Markov chain (i.e. some renewal processes).
In other words, the processes are flows of events (not Poisson!).

In general, this multidimensional process is non-regenerative.

It is well known that in a queuing theory and related disciplines it is very important to establish the ergodicity of the systems, as well as their stationary distribution.
Also it is important to know what is the rate of convergence of the distribution of the system to the limit distribution.
This is important because when we use some systems in practice, we need to know when the stationary distribution can be used in calculations and for estimations instead of the time-varying distribution of the system.

Our aim is to prove the ergodicity of the process $X_t$ under some assumptions about the distributions of random times between the events of all processes $X_t^{(i)}$.

\section{Some preliminary considerations}
\begin{definition}
[Regenerative Process]
A random process is called regenerative if there exists an increasing sequence $\{t_i\}_{i=0,1,2,\ldots}$, such, that the random elements $\Theta_i\stackrel{{\rm def}}{=\!\!\!=}  \{X_t, \, t\in [t_{i-1};t_i]\}$ are i.i.d. $\forall \;i=1,2,\ldots$.

Times $t_i$ are named {\it regeneration times}.

Denote $\tau_i\stackrel{{\rm def}}{=\!\!\!=}  t_{i+1}-t_{i}$, and let $\mathscr {P} _t$ be a distribution of regenerative process at the time $t$.
\hfill\ensuremath{\triangleright}
\end{definition}

It is well-known that:

1. If $\mathbf{E} \,\tau_i<C<\infty$, then the regenerative process is ergodic, i.e. there exists the probability distribution $\mathscr {P} $, such that $\mathscr {P} _t\Longrightarrow \mathscr {P} $.

2. If $\mathbf{E} \,(\tau_i)^k<\infty$, then $\exists \; K(\mathscr {P} _0):$
\begin{equation}\label{poly}
\|\mathscr {P} _t-\mathscr {P} \|_{TV}\le \frac{K((k-1),\mathscr {P} _0)}{t^{k-1}}.
\end{equation}

3. If $\mathbf{E} \,\exp(\alpha\tau_i)<\infty$, then $\forall \;\beta<\alpha\; \exists \; K(\mathscr {P} _0,\beta):$
$$
\|\mathscr {P} _t-\mathscr {P} \|_{TV}\le K(\mathscr {P} _0,\beta) \exp{(-\beta t)}.
$$
These results are the classic results, but they make it impossible to estimate the value $K$ -- see, e.g., \cite{Asm,Thor,GK,Af} et all.

The convergence rate can be estimated explicitly in a fairly limited number of cases.

These are situations when, for example, in the QS the incoming flow is Poisson, and the service times are distributed exponentially.
The same approach is applied to the study of reliability for systems consisting of restorable elements: the rate of convergence of the distribution of such a reliability system can be estimated when the work and repair times are exponential (see, e.g., \cite{GBS}).
It is always assumed that all switching (transitions between operating and repair modes) occur instantly.

The goal of the present paper is to  estimate the convergence rate of the QS distribution, consisting of {\it dependent} servers, and the connection of which may be delayed.

Previously, such systems were studied using the special Lyapunov function -- see, e.g., \cite{ver1,ver2}.

Note, that the regenerative process have an embedded renewal process, and the bounds of the convergence rate of this renewal process in total variation metrics are also the bounds of the convergence rate of studied regenerative process.
This fact is the basis for obtaining strong upper bounds for the convergence rate of regenerative processes.
Also, this general method for obtaining an upper bounds for the constant $K$ was invented in \cite{zv2}.
However, special methods are desirable for specific QS.

All known upper bounds for the constants for the convergence rate are based on the coupling method, invented in \cite{zv6}.

\subsection{Some information about the coupling method}
Consider two \emph{independent Markov processes} with different initial states $X_0$ and $\widehat{X}_0$ and with the same transition function.
Denote these processes by $X_t $ and $\widehat{X}_t$ correspondingly.
Suppose we can find the time $\tau$ where they are coincided.
The time $\tau$ is called \emph{coupling epoch} and it depends on $X_0$ and $\widehat{X}_0$.
After the time $\tau(X_0 ,\widehat{X}_0)$, the distributions of the processes $X_0 $ and $\widehat{X}_0$ are coincided by Markov property.
Thus, for all $t\ge \tau(X_0 ,\widehat{X}_0)$, and for all set $\mathscr {A}\in \sigma(\mathscr {X})$, $\mathbf{P}\{X_t \in \mathscr {A}\}=\mathbf{P}\{\widehat{X}_t\in \mathscr {A}\}$.
It implies the basic coupling inequality:
\begin{eqnarray*}
|\mathbf{P} \{X_t \in \mathscr {A}\} - \mathbf{P} \{{\widehat{X}}_t\in \mathscr {A}\}|
=|\mathbf{P} \{X_t \in \mathscr {A}\;\&\; \tau>t\} - \mathbf{P} \{{\widehat{X}}_t\in \mathscr {A}\;\&\; \tau>t\} + \nonumber
\\ \\
+ |\mathbf{P} \{X_t \in \mathscr {A}\;\&\; \tau \le t\} - \mathbf{P} \{{\widehat{X}}_t\in \mathscr {A}\;\&\; \tau \le t\}|= \hspace{2.5cm} %%{\bb zv12}\;\;
\\ \\ \nonumber
=|\mathbf{P} \{X_t \in \mathscr {A}\;\&\; \tau>t\} - \mathbf{P} \{{\widehat{X}}_t\in \mathscr {A}\;\&\; \tau>t\} |\le \mathbf{P} \{\tau>t\}. \nonumber
\end{eqnarray*}
Then, if it is possible to find the increasing positive function $\varphi(\tau)$ such that $\mathbf{E}\,\varphi(\tau(X_0 ,\widehat{X}_0))<\infty$, then by Markov inequality,
$$
\mathbf{P} \{\tau (X_0 ,\widehat{X}_0)\ge t\}=\linebreak \mathbf{P} \{\varphi(\tau(X_0 ,\widehat{X}_0)) \ge \varphi(t)\}\le \displaystyle \frac{\mathbf{E} \,\varphi(\tau(X_0 ,\widehat{X}_0))}{\varphi(t)}.
$$
From the last inequality the bounds for convergence of the distribution $\mathscr {P}_t$ can be obtained.
Let's explain it.

If the process $\widehat{X}$ starts from the stationary distribution $\mathscr  P$ of $X_t$, i.e. at any time, the distribution of $\widehat{X}_t$ is the same as the one of $\widehat{X}_0$, then
$$
|\mathbf{P} \{X_t \in \mathscr {A}\} - \mathscr {P} (A)\}|\le \frac{\displaystyle\int\limits _\mathscr  X\varphi(\tau(X_0 ,\widehat{X}_0))\mathscr  P(\,\mathrm{d} \, \widehat{X}_0)}{\varphi(t)},
$$
where $\mathscr {X} $ is the state space of $X_t$

This schema can be used for discrete Markov chain and for Markov chain in continuous time.
But for the process $X_t$ in continuous time, the ``direct'' coupling method is impossible, because for different values $X_0\neq \widehat X_0$, $\mathbf{P}\{\tau(X_0,\widehat X_0)<\infty\}=0$.
Thus, the modification of coupling method, or \emph{successful coupling} will be used.

\subsection{Successful coupling (see \cite{zv5}).}\label{sc} %{\bf ~~~sc~~~}
Let $X_t$ and $\widehat{X}_t$ be two independent Markov processes with the same transition function, but with different initial states at time $t=0$.

Suppose that (\emph{dependent}) processes $Y_t$ and $\widehat{Y}_t$ are constructed on some probability space, in such a way that:
\\

\noindent \textbf{1.} $Y_t\stackrel{\mathscr {D}}{=}X_t$ and $\widehat{Y}_t\stackrel{\mathscr {D}}{=}\widehat{X}_t$ for all \emph{non-random} $t$;\\ \\
\textbf{2.} $\mathbf{P} \{\tau(X_0,\widehat{X}_0)<\infty\}=1$, where $\tau(X_0,\widehat{X}_0)=\tau(Y_0,\widehat{Y}_0)=\inf\{t>0:\; Y_t=\widehat{Y}_t\}$.\\

This pair of processes $Y_t$ and $\widehat{Y}_t$ is called \emph{successful coupling} for the processes $X_t$ and $\widehat{Y}_t$, and $\tau (X_0,\widehat{X}_0)$ is called \emph{ coupling epoch}.

For successful coupling, the basic coupling inequality can be applied as:
\begin{eqnarray}
|\mathbf{P} \{X_t\in \mathscr {A}\} - \mathbf{P} \{\widehat{X}_t\in \mathscr {A}\}|= |\mathbf{P} \{Y_t\in \mathscr {A}\} - \mathbf{P} \{\widehat{Y}_t\in \mathscr {A}\}|=\hspace{3cm} \nonumber
\\ \nonumber\\\nonumber
=|\mathbf{P} \{Y_t\in \mathscr {A}\;\&\; \tau>t\} - \mathbf{P} \{\widehat{Y}_t\in \mathscr {A}\;\&\; \tau>t\} + \nonumber \hspace{4cm}
\\\nonumber \\
+ |\mathbf{P} \{Y_t\in \mathscr {A}\;\&\; \tau\le t\} - \mathbf{P} \{\widehat{Y}_t\in \mathscr {A}\;\&\; \tau\le t\}|= \hspace{3.5cm} \label{zv12} %\mbox{\bf ~~~ zv12}\;\;
\\ \nonumber \\ \nonumber
=|\mathbf{P} \{Y_t\in \mathscr {A}\;\&\; \tau> t\} - \mathbf{P} \{\widehat{Y}_t\in \mathscr {A}\;\&\; \tau> t\} |\le \mathbf{P} \{\tau> t\}
\end{eqnarray}
for any set $\mathscr {A}\in\sigma(\mathscr {X})$.
Here, identical distribution of pairs $Y_t\stackrel{\mathscr {D}}{=} X_t$ and $\widehat{Y}_t\stackrel{\mathscr {D}}{=} \widehat{X}_t$ means
that only marginal distributions coincide in any time, but not finite-dimensional distributions of these processes.

Now, our goal is a construction of the successful coupling and an estimation of polynomial moments of a random variable $\tau(X_0,\widehat{X}_0)$.
For this construction the Basic Coupling Lemma is needed.

\subsection{Basic Coupling Lemma (see, e.g., \cite{zv13,verbut,GZ1}).}
Here the simplest formulation of the Basic Coupling Lemma is given.

\begin{lemma}\label{osn} %{\bf ~~~ osn ~~~}
If the random variable $\vartheta_1$ and $\vartheta_2$ have c.d.f. $\Phi_1(s)$ and $\Phi_2(s)$ cor\-res\-pon\-din\-g\-ly, and their common part $\varkappa\stackrel{\mbox{\rm def}}{=\!\!\!=} \int\limits_\mathbf{R} \min\{\Phi_1'(s), \Phi_2'(s)\}\,\mathrm{d}\, s>0$, then the random variables $\widehat{\vartheta}_1$ and $\widehat{\vartheta}_2$ can be constructed (on some probability space) such, that

1. $\widehat{\vartheta}_1\stackrel{\mathscr {D}}{=} {\vartheta}_1$, $\widehat{\vartheta}_2\stackrel{\mathscr {D}}{=} {\vartheta}_2$;\qquad
2. $\mathbf{P} \{\widehat{\vartheta}_1= \widehat{\vartheta}_2\}=\varkappa$. \hfill \ensuremath{\triangleright}
\end{lemma}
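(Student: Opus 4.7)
The plan is to give an explicit construction via a randomized mixture based on the standard ``common part / residual part'' decomposition. Set $g(s) \stackrel{\text{def}}{=} \min\{\Phi_1'(s), \Phi_2'(s)\}$, so that $\int_{\mathbf{R}} g(s)\,\mathrm{d}s = \varkappa$, and define the residuals $r_i(s) \stackrel{\text{def}}{=} \Phi_i'(s) - g(s)$ for $i = 1,2$. Pointwise one has $r_i(s) = \max\{0, \Phi_i'(s) - \Phi_{3-i}'(s)\}$, hence $r_1 r_2 \equiv 0$, and $\int r_i = 1 - \varkappa$. The case $\varkappa = 1$ is immediate (take $\widehat{\vartheta}_1 = \widehat{\vartheta}_2$ drawn from the common density), so assume $0 < \varkappa < 1$ and normalize $\widetilde{g} \stackrel{\text{def}}{=} g/\varkappa$, $\widetilde{r}_i \stackrel{\text{def}}{=} r_i/(1-\varkappa)$ into probability densities.

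On an auxiliary probability space I would introduce mutually independent random variables: a Bernoulli indicator $\xi$ with $\mathbf{P}(\xi = 1) = \varkappa$, a random variable $Z$ with density $\widetilde{g}$, and two random variables $U_1, U_2$ with densities $\widetilde{r}_1, \widetilde{r}_2$ respectively. Then set
\begin{equation*}
\widehat{\vartheta}_1 \stackrel{\text{def}}{=} \xi Z + (1-\xi) U_1, \qquad \widehat{\vartheta}_2 \stackrel{\text{def}}{=} \xi Z + (1-\xi) U_2.
\end{equation*}
Informally: flip a coin with bias $\varkappa$; on heads output the same sample from the common part, on tails output independent samples from the two disjoint residual parts.

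Verification of property 1 is a direct computation: the density of $\widehat{\vartheta}_i$ is the mixture $\varkappa \widetilde{g}(s) + (1-\varkappa)\widetilde{r}_i(s) = g(s) + r_i(s) = \Phi_i'(s)$, so $\widehat{\vartheta}_i \stackrel{\mathscr{D}}{=} \vartheta_i$. For property 2, the event $\{\xi = 1\}$ forces $\widehat{\vartheta}_1 = \widehat{\vartheta}_2 = Z$, giving $\mathbf{P}(\widehat{\vartheta}_1 = \widehat{\vartheta}_2) \ge \varkappa$.

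The only delicate point, and the main obstacle I expect, is the reverse inequality $\mathbf{P}(\widehat{\vartheta}_1 = \widehat{\vartheta}_2) \le \varkappa$, i.e.\ showing that on $\{\xi = 0\}$ the variables $U_1$ and $U_2$ almost surely differ. Here I would invoke two facts: the supports of $\widetilde{r}_1$ and $\widetilde{r}_2$ are disjoint (since $r_1 r_2 \equiv 0$ by construction), and both variables are absolutely continuous — which is implicit in the hypothesis that $\Phi_i'$ exists and $\varkappa$ is defined via the Lebesgue integral. Independence plus absence of atoms then yields $\mathbf{P}(U_1 = U_2) = \int \mathbf{P}(U_1 = y)\widetilde{r}_2(y)\,\mathrm{d}y = 0$, and combining both cases gives the desired equality $\mathbf{P}(\widehat{\vartheta}_1 = \widehat{\vartheta}_2) = \varkappa$. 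The same construction extends to singular or mixed $\Phi_i$ by working with Radon--Nikodym derivatives against a dominating measure, but in the absolutely continuous setting stated here the argument is clean.
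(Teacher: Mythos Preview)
Your construction is correct and is exactly the standard maximal coupling via the common-part/residual decomposition. The paper does not actually prove Lemma~\ref{osn}; it is stated with references, and the only proof given in the paper is for the $n$-variable generalization (Lemma~\ref{lem4}). That proof uses the same idea you do --- split each density as $\varphi_i = \varphi + (\varphi_i-\varphi)$ with $\varphi=\min_i\varphi_i$, normalize, and mix according to an independent $\mathrm{Bernoulli}(\varkappa)$ --- but phrases it through inverse c.d.f.'s applied to independent uniforms on $[0,1)^{n+1}$ rather than through abstractly-defined $Z,U_1,U_2$. So your approach and the paper's are the same construction in different notation.

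One remark: your treatment of the equality $\mathbf{P}\{\widehat\vartheta_1=\widehat\vartheta_2\}=\varkappa$ (not just $\ge\varkappa$) is actually more careful than the paper's. You correctly note that the residual densities $\widetilde r_1,\widetilde r_2$ have disjoint supports (since $r_1r_2\equiv 0$), so on $\{\xi=0\}$ one has $U_1\neq U_2$ almost surely even without appealing to absolute continuity; the disjoint-support observation alone suffices and is the cleanest way to close the argument. The paper's proof of Lemma~\ref{lem4} simply asserts that the construction ``is easy to see'' satisfies both conditions, without isolating this point.
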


The statement of Lemma \ref{osn} has been extended to any finite number of random variables.
\begin{lemma}\label{lem4}  %{\bf ~~~~ lem4~~~~~}
Let $\vartheta_1$, $\vartheta_2$, $\ldots$, $\vartheta_n$ be the random variable with probability densities $\varphi_1(s)$, $\varphi_2(s)$, $\ldots$, $\varphi_n(s)$ correspondingly, and $\varkappa \stackrel{\mbox{\rm def}}{=\!\!\!=}\int\limits_\mathbf{R}\min\limits_{i=1,\ldots,n}\{\varphi_i(s)\}\, \mathrm{d}\, s>0$.
Then on some probabilistic space it is possible to construct the random variables $\widehat{\vartheta}_1(s)$, $\widehat{\vartheta}_2(s)$, $\ldots$, $\widehat{\vartheta}_n(s)$ such that

$1.\;\;\widehat{\vartheta}_i\stackrel{\mathscr {D}}{=} {\vartheta}_i$, $i=1,2,\ldots n;\qquad
2. \;\;\mathbf{P} \{\widehat{\vartheta}_1= \widehat{\vartheta}_2=\ldots=\widehat{\vartheta}_n\}=\varkappa$. \hfill\ensuremath{\triangleright}
\end{lemma}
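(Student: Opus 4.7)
The plan is to generalize the construction underlying Lemma~\ref{osn} in the obvious way. Define $m(s) = \min_{i=1,\ldots,n}\varphi_i(s)$, so that by hypothesis $\varkappa = \int_{\mathbf{R}} m(s)\,\mathrm{d}s \in (0,1]$. The degenerate case $\varkappa = 1$ forces $\varphi_1 = \cdots = \varphi_n$ almost everywhere, and one simply takes all $\widehat{\vartheta}_i$ equal to a single draw from the common density. Otherwise introduce the \emph{common} density $g(s) = m(s)/\varkappa$ and the \emph{residual} densities $r_i(s) = (\varphi_i(s) - m(s))/(1-\varkappa)$; each $r_i$ is a genuine probability density, since $\varphi_i(s) \ge m(s)$ and $\int_{\mathbf{R}} (\varphi_i - m)\,\mathrm{d}s = 1-\varkappa$.

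On an auxiliary probability space take mutually independent random variables $\xi, U, V_1, \ldots, V_n$, where $\xi$ is Bernoulli with $\mathbf{P}\{\xi = 1\} = \varkappa$, $U$ has density $g$, and $V_i$ has density $r_i$. Set $\widehat{\vartheta}_i = U$ on $\{\xi = 1\}$ and $\widehat{\vartheta}_i = V_i$ on $\{\xi = 0\}$.

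Conditioning on $\xi$ shows that $\widehat{\vartheta}_i$ has density
\begin{equation*}
\varkappa\, g(s) + (1-\varkappa)\, r_i(s) = m(s) + (\varphi_i(s) - m(s)) = \varphi_i(s),
\end{equation*}
which gives property~1. For property~2, on $\{\xi = 1\}$ all the $\widehat{\vartheta}_i$ equal the same $U$, contributing exactly $\varkappa$ to the coincidence probability; on $\{\xi = 0\}$ the $V_i$ are independent with absolutely continuous laws, so the probability that they all assume a common value vanishes. Hence $\mathbf{P}\{\widehat{\vartheta}_1 = \cdots = \widehat{\vartheta}_n\} = \varkappa$.

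I do not expect a genuine obstacle here: the $n$-variable argument is structurally identical to the $n = 2$ case of Lemma~\ref{osn}, with the single lower envelope $m(s)$ playing the role of the pairwise minimum $\min\{\Phi_1', \Phi_2'\}$. The only mild check is that the residuals $r_i$ are well-defined probability densities, and this is immediate from $\varphi_i \ge m$ and the normalization computation above; no new phenomena emerge for larger $n$.
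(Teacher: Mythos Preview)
Your argument is correct and follows essentially the same route as the paper: both proofs rest on the mixture decomposition $\varphi_i = \varkappa\,g + (1-\varkappa)\,r_i$ with $g = m/\varkappa$ the common part and $r_i$ the residuals, then couple by drawing the common part jointly with probability $\varkappa$. The only cosmetic difference is that the paper realizes the sampling explicitly via inverse distribution functions $\Psi^{-1}$, $\Psi_i^{-1}$ applied to uniforms on $[0,1)^{n+1}$, whereas you invoke the abstract mixture directly; your phrasing is in fact slightly cleaner.
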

\begin{proof} Let $\varkappa<1$ (the proof for the case $\varkappa=1$ is very simple). Consider a probability space $(\Omega,\sigma(\Omega),\mathbf{P})$, where $\Omega=[0;1)^{n+1}=[0;1)_1\times[0;1)_2\times\cdots\times[0;1)_{n+1}$, $\sigma(\Omega)$ is its Borel $\sigma$-algebra, and $\mathbf{P}$ is Lebesgue measure on $\Omega$.

Let $\mathscr {U}_i$ be the random variable with continuous uniform distribution on $[0;1)_i$, \linebreak$i=1,\ldots,(n+1)$.

Let $\varphi(s)\stackrel{\mbox{\rm def}}{=\!\!\!=}\min\limits_{i=1,\ldots,n}\varphi_i(s)$, and
$$\Psi(s)\stackrel{\mbox{\rm def}}{=\!\!\!=}\frac{1}{\varkappa}\int\limits_{-\infty}^s \varphi (u)\, \mathrm{d}\, u, \qquad \Psi_i(s)\stackrel{\mbox{\rm def}}{=\!\!\!=}\frac{1}{1-\varkappa}\int\limits_{-\infty}^s (\varphi_i (u)-\varphi(u))\, \mathrm{d}\, u.
$$
$\Psi$ and $\Psi_i$ are the distribution functions.

Put $\widehat{\vartheta}_i\stackrel{\mbox{\rm def}}{=\!\!\!=} \Psi_i^{-1}(\mathscr {U}_i)\times \textbf{1}(\mathscr {U}_{n+1}>\varkappa)+ \Psi^{-1}(\mathscr {U}_i)\times \textbf{1}(\mathscr {U}_{n+1}\le\kappa)$, $i=1,\ldots,n$.
It is easy to see that the random variables $\widehat{\vartheta}_i$ satisfy the conditions 1 and 2 of Lemma \ref{lem4}. \hfill \ensuremath{\triangleright}
\end{proof}

The Lorden's inequality has been used in \cite{zv1,zv2,zvpol} for coupling method application to find upper bounds of convergence rate for regenerative processes.

Now, we need to use some {\it generalization} of the Lorden's inequality.

\section{Lorden's inequality}\label{sec2} %{\bf ~~~ sec2~~~}

Consider the renewal process $N_t\stackrel{{\rm def}}{=\!\!\!=} \displaystyle \sum\limits  _{i=1}^\infty \mathbf{1} \left\{ \sum\limits  _{k=1}^i \xi_k\le t\right\} $, where
$\left\{\xi_1, \xi_2, ...\right\} $ is the of i.i.d. positive random variables.
$N_t$ is a counting process that changes its value at the times $t_k=S_k\stackrel{{\rm def}}{=\!\!\!=} \displaystyle \sum\limits  _{j=1}^k \xi_j$.
The times $t_k$ are the renewal times.

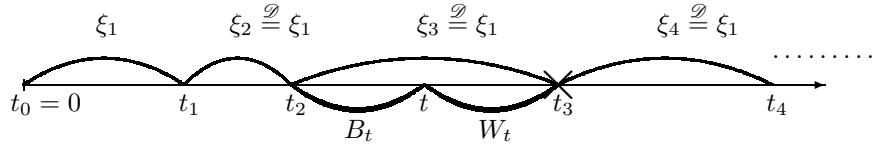
\begin{figure}[ht]
\centering
\begin{picture}(300,45)
\put(0,20){\vector(1,0){300}}
\put(0,18){\line(0,1){4}}
\thicklines
{\qbezier(0,20)(30,40)(60,20)}%zeta0
{\qbezier(60,20)(80,40)(100,20)}
\qbezier(100,20)(150,40)(200,20)%zeta3
\qbezier(200,20)(240,40)(280,20)%zeta4
%\qbezier(240,20)(260,0)(280,20)%zeta4
%\qbezier(280,20)(320,30)(330,31)%zeta4
\put(280,30){\ldots \ldots \ldots}
\put(58,10){$t_1$}
\put(98,10){$t_2$}
\put(198,10){$t_3$}
\put(278,10){$t_4$}
\put(27,40){$\xi_1$}
\put(77,40){$\xi_2\stackrel{\mathscr {D}}{=} \xi_1$}
\put(147,40){$\xi_3\stackrel{\mathscr {D}}{=} \xi_1$}
\put(237,40){$\xi_4\stackrel{\mathscr {D}}{=} \xi_1$}
%\multiput(0,5)(3,0){11}{\ww \line(0,1){40}}
\put(-5,10){$t_0=0$}
\put(148,10){$t$}
\put(195,15){\line(1,1){10}}
\put(195,25){\line(1,-1){10}}
\qbezier(150,20)(175,0)(200,20)
\qbezier(150,20)(175,1)(200,20)
\qbezier(150,20)(175,2)(200,20)
\put(170,0){$W_t$}
\qbezier(100,20)(125,0)(150,20)
\qbezier(100,20)(125,1)(150,20)
\qbezier(100,20)(125,2)(150,20)
\put(120,0){$B_t$}
%
%\multiput(102,2)(3,0){25}{РѕР±СЉРµРєС‚}
%\bb
%\qbezier(243,20)(271,0)(300,20)
%\put(270,0){$w'_t$}
%\put(-5,10){$-b'_0$}
%\thinlines
\end{picture}
\caption{$B_t$ is a backward renewal time, and $W_t$ is a forward renewal time at the fixed time $t$; $B_t=t-\displaystyle \sum\limits  _{1}^{N_{t}}\xi_i$.}
\label{fig1}%\mbox{\bf ~~~ fig1}
\end{figure}
In Fig.\ref{fig1}, we see the {\it backward renewal time (or overshoot)} $B_t$, and the {\it forward renewal time (or undershot)} $W_t$ at the {\bf fixed} time $t$ (See Fig.\ref{fig1}):
$$B_t\stackrel{{\rm def}}{=\!\!\!=}  t -S_{N_t}; \qquad W_t\stackrel{{\rm def}}{=\!\!\!=}  S_{N_t+1} -t.$$

\begin{theorem}[Lorden, G. (1970) \cite{Lorden}; see, e.g. \cite{Chang}]\label{thm} %{\bР° ~~~ thm ~~~}
Lorden's inequality states that the expectation of this overshoot is bounded as
$$
\mathbf{E} \, B_t \le \frac{\mathbf{E} \, \xi_1^2}{\mathbf{E} \, \xi_1}. \eqno{\triangleright}
$$
%\hfill\ensuremath{\triangleright}
\end{theorem}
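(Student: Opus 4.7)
The plan is to derive and solve a standard renewal equation for $b(t):=\mathbf{E} B_t$, and then extract a uniform bound in three stages: a trivial estimate for small $t$, the key renewal theorem for large $t$, and a pathwise correction covering the intermediate range.

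First I would condition on the first renewal time $\xi_1$. If $\xi_1>t$ then no renewal has occurred by time $t$, so $N_t=0$ and $B_t=t$ pathwise, contributing $t\,(1-F(t))$ to $b(t)$. If $\xi_1=s\le t$, the renewal property identifies $B_t\stackrel{\mathscr D}{=}B_{t-s}$ for a fresh copy of the process. These two cases combine into the renewal equation
$$
b(t)=t\,(1-F(t))+\int_0^t b(t-s)\,dF(s),
$$
whose unique locally bounded solution is
$$
b(t)=\phi(t)+\int_0^t \phi(t-s)\,dm(s),\qquad \phi(u):=u\,(1-F(u)),
$$
with $m(t):=\mathbf{E}\,N_t$ the renewal function. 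A direct computation gives the useful identity $\int_0^\infty \phi(u)\,du=\tfrac12\mathbf{E}\,\xi_1^2$.

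Next I would dispatch the two easy regimes. Writing $c:=\mathbf{E}\,\xi_1^2/\mathbf{E}\,\xi_1$ for the target constant, the pathwise bound $B_t\le t$ immediately gives $b(t)\le t\le c$ whenever $t\le c$. At the other extreme, the key renewal theorem applied to the convolution above, combined with Markov's inequality $\phi(t)=t\,\mathbf{P}(\xi_1>t)\le \mathbf{E}\,\xi_1^2/t\to 0$, yields $b(t)\to c/2<c$ as $t\to\infty$, so the bound also holds for all sufficiently large $t$.

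The main obstacle is the intermediate range $t>c$, on which neither estimate alone suffices and on which the asymptotic produces only half of the target constant. Closing this factor-of-two gap requires a genuinely pathwise or cycle-based argument: one combines the two-sided decay $\phi(u)\le\min\{\mathbf{E}\,\xi_1,\,\mathbf{E}\,\xi_1^2/u\}$ (Markov's inequality applied to $\xi_1$ and to $\xi_1^2$) with a careful estimate of the mass of $dm$ on the support of $\phi(t-\cdot)$, exploiting the average renewal intensity $1/\mathbf{E}\,\xi_1$. I want to stress that I would avoid invoking any a priori upper bound on the renewal function of the form $m(t)+1\le t/\mathbf{E}\,\xi_1+c/\mathbf{E}\,\xi_1$, since such a bound is itself equivalent (via Wald's identity applied to the stopping time $T_t=N_t+1$) to Lorden's inequality for the forward time $W_t$; doing so would render the argument circular. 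The honest, non-circular version of the bridging step is the content of Lorden's original 1970 argument, which is invoked from \cite{Lorden}.
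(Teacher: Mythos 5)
The paper itself offers no proof of this theorem: it is quoted as a classical result with references to Lorden (1970) and Chang (1994), and the statement is closed immediately with the end marker. So the only question is whether your attempt constitutes a self-contained proof, and it does not. Your renewal-equation setup is correct ($b(t)=t(1-F(t))+\int_0^t b(t-s)\,dF(s)$, hence $b=\phi+\phi*m$ with $\phi(u)=u(1-F(u))$ and $\int_0^\infty\phi(u)\,du=\tfrac12\mathbf{E}\,\xi_1^2$), and the two easy regimes are handled correctly, but the entire content of Lorden's inequality is the \emph{uniform-in-$t$} bound with constant $c=\mathbf{E}\,\xi_1^2/\mathbf{E}\,\xi_1$; the regime $t\le c$ is trivial ($B_t\le t$) and the key renewal theorem only gives an asymptotic statement with a non-effective threshold (and, strictly, requires $F$ non-lattice and direct Riemann integrability of $\phi$). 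The bridging step you describe for the intermediate range is exactly the theorem, and you explicitly delegate it back to \cite{Lorden}. As written, the proposal is therefore a reduction of Lorden's inequality to Lorden's original argument, not a proof; your (correct) observation that the renewal-function bound $m(t)+1\le (t+c)/\mathbf{E}\,\xi_1$ cannot be imported without circularity only underlines that the hard step has not been supplied.

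Two secondary points. First, the quantity bounded in the paper is the backward renewal time (age) $B_t=t-S_{N_t}$, whereas Lorden's 1970 inequality and Chang's paper are stated for the forward overshoot $S_{N_t+1}-t$; your renewal equation is indeed the one for the age, so even the citation you fall back on does not literally cover the statement you are proving — one would need either a separate argument for the age or a deduction of the age bound from the overshoot bound (e.g., via the renewal-function reformulation you mention, which would then be legitimate rather than circular, since the overshoot version would be the imported ingredient). Second, since the paper treats Theorem \ref{thm} purely as a cited classical fact, a reviewer would accept either a complete elementary proof or a bare citation; what is not acceptable is the intermediate form you give, which has the appearance of a proof but leaves the decisive estimate as an appeal to the very reference being proved.
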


This inequality is the tool for upper bounds for convergence rate in total variation metrics.

However, we need to use some generalization of the Lorden's inequality.

\subsection{Generalized Lorden's inequality}
\begin{definition}[Quasi-renewal process]
Consider the counting process \linebreak$N_t\stackrel{{\rm def}}{=\!\!\!=} \displaystyle \sum\limits  _{i=1}^\infty \mathbf{1} \left\{ \sum\limits  _{k=1}^i \xi_k\le t\right\} $, where
$\left\{\xi_1, \xi_2, ...\right\} $ are positive random variables is named quasi regenerative if the r.v.'s $\xi_i$ are not  i.i.d.

As for a classic renewal process, $N_t$ is a counting process that changes its value at the times $t_k=S_k\stackrel{{\rm def}}{=\!\!\!=} \displaystyle \sum\limits  _{j=1}^k \xi_j$.
The times $t_k$ are the renewal times. \hfill\ensuremath{\triangleright}
\end{definition}
The generalized Lorden's inequality estimates the expectation of the backward renewal time of quasi-renewal process -- in some condition, naturally.
\subsection{Some preliminary considerations about generalized intensities.}
The random variables can be defined by distribution function, by its density, and by its {\it intensity}.
Obviously, the intensity is a means for study the absolutely continuous distributions, and for d.f. $F(x)$ the intensity is equal $\lambda(x)=\displaystyle \frac{F'(x)}{1-F(x)}$.

This formula is correct:

\begin{equation}\label{inten}%\mbox{\bf ~~~ inten ~~~}
F(x)=1-\exp\left(\int\limits _0^x -\lambda(s)\,\mathrm{d} \, s\right).
\end{equation}

But we are interested in mixed random variables, i.e. their distribution functions can have jumps.

If $F(a-0)\neq F(a+0)$, the {\bf we put} $\lambda(a)\stackrel{{\rm def}}{=\!\!\!=} -\ln \big(F(a+0)-F(a-0) \big)\delta(0)$, where $\delta(\cdot)$ is a ``classic'' $\delta$-function.
So, {\bf we put}
$$
f(s)=\left\{
\begin{array}{ll}
F'(s), & \mbox{if }\exists\; F'(s);\\ \\
0, & \mbox{otherwise}.
\end{array}
\right.
$$
Finally, (generalized) intensity is
$$\lambda(s)\stackrel{{\rm def}}{=\!\!\!=}  \displaystyle \frac{f(s)}{1-F(s)}-\sum\limits  _{i}\delta(s-a_i)\ln\big(F(a_i+0)-F(a_i-0) \big),$$ where $\{a_i\}$ is a set of the discontinuous points of d.f. $F(s)$.

It is easy to see, that the formula (\ref{inten}) remains true for generalized intensity.

Let $Int_{\xi}(s)$ be an intensity of r.v. $\xi$.

We skip very easy proof of the next Lemma \ref{min}.
\begin{lemma}\label{min} %\mbox{\bf ~~~ min}
$Int_{\min\{\xi;\eta\}}=Int_{\xi}+Int_{\eta}$. \hfill\ensuremath{\triangleright}
\end{lemma}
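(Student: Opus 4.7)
The plan is to exploit the multiplicative structure that $\min$ imposes on survival functions and combine it with the exponential representation (\ref{inten}) that the author postulates for the generalized intensity.

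First I would note that independence of $\xi$ and $\eta$ is tacitly assumed here (this is the setting in which inter-event times of distinct component flows $X_t^{(i)}$ get thinned through $\min$ during the coupling construction), and use it to write
\begin{equation*}
1-F_{\min\{\xi,\eta\}}(x) \;=\; \mathbf{P}\{\xi>x\}\,\mathbf{P}\{\eta>x\} \;=\; (1-F_\xi(x))(1-F_\eta(x)).
\end{equation*}
Next I would substitute the exponential representation (\ref{inten}) into each factor on the right; the product of two exponentials collapses at once to
\begin{equation*}
\exp\!\left(-\int_0^x Int_\xi(s)\,\mathrm{d}\, s\right)\exp\!\left(-\int_0^x Int_\eta(s)\,\mathrm{d}\, s\right) \;=\; \exp\!\left(-\int_0^x (Int_\xi(s)+Int_\eta(s))\,\mathrm{d}\, s\right).
\end{equation*}
Applying (\ref{inten}) once more on the left-hand side, with intensity $Int_{\min\{\xi,\eta\}}$, and comparing exponents pointwise in $x$, one obtains
\begin{equation*}
\int_0^x Int_{\min\{\xi,\eta\}}(s)\,\mathrm{d}\, s \;=\; \int_0^x (Int_\xi(s)+Int_\eta(s))\,\mathrm{d}\, s \qquad \forall\; x\ge 0,
\end{equation*}
and differentiating in the distributional sense yields the claimed identity $Int_{\min\{\xi,\eta\}} = Int_\xi + Int_\eta$.

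The only place where some care is warranted is the handling of atoms: if $F_\xi$ or $F_\eta$ carries a jump at a point $a$, the corresponding intensities contain Dirac $\delta$-terms, and the same is true of $F_{\min\{\xi,\eta\}}$ at $a$ (and symmetric points for the other variable). However, multiplying the two survival functions also multiplies their jump factors at $a$, which translates precisely to adding the $\delta$-weights inside the exponent, exactly as the sum $Int_\xi+Int_\eta$ prescribes. Since the author explicitly declares (\ref{inten}) to remain valid in the generalized-intensity sense, this bookkeeping requires no separate argument. I therefore anticipate no substantive obstacle; the lemma is essentially a one-line reformulation of the multiplicativity of survival functions under independence, transported through the logarithm into the additive language of intensities, which matches the author's own remark that the proof is "very easy."
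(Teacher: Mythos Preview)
Your argument is correct and is exactly the standard one-line computation the author has in mind. The paper in fact omits the proof entirely (``We skip very easy proof of the next Lemma~\ref{min}''), so there is nothing to compare against; your derivation via the multiplicativity of survival functions under independence, pushed through the exponential representation~(\ref{inten}), is the natural and complete justification, and your remark on atoms is the appropriate check that the generalized-intensity formalism behaves as expected.
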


\subsection {Notations and assumptions.} Consider the sequence $\left\{\xi_1, \xi_2, ...\right\} $ of random variables.
\\\\
{\bf Assumptions}
\begin{enumerate}
%\pause
{\it
\item $\xi_j=\min\{\zeta_j;\eta_j\}$, were $\{\zeta_j\}$ are i.i.d. r.v.'s defined by (generalized) intensities $\varphi_i(s)\equiv \varphi(s)$, and $\zeta_i \perp\!\!\!\perp \eta_j$ for all $i$, $j$; $\eta_j$ are defined by (generalized) intensities $\mu_j(s)$;
%\pause
\item There exists some (generalized) measurable function $Q(s)$ such that for all $s\ge 0$, $\varphi(s)+\mu_j(s)=\lambda_i(s) \le Q(s)$;
%\pause
\item $\displaystyle \int\limits _0^\infty \varphi(s)\,\mathrm{d} \, s = \infty$, and $\displaystyle \int\limits _0^\infty \left( x^{k-1} \exp\left(-\int\limits _0^x \varphi(s)\,\mathrm{d} \, s\right)\right)\,\mathrm{d} \, x<\infty $ for some $k\ge 2$;
%\pause
\item $Q(s)$ is bounded in some neighborhood of zero;
%\pause
\item $\varphi(s)>0$ a.s. for $s>T \ge 0$.} \hfill\ensuremath{\triangleright}
\end{enumerate}
%\pause
%\it РџСЂРѕС†РµСЃСЃ, СѓРґРѕРІР»РµС‚РІРѕСЂСЏСЋС‰РёР№ СѓСЃР»РѕРІРёСЏРј 1--4, РЅР°Р·РѕРІС‘Рј {\it РѕР±РѕР±С‰С‘РЅРЅС‹Рј РїСЂРѕС†РµСЃСЃРѕРј РІРѕСЃСЃС‚Р°РЅРѕРІР»РµРЅРёСЏ.}

\begin{definition}
If conditions 1--4 are satisfied, then the counting process
\begin{equation}\label{def}  %{\bf ~~~ def ~~~}
N_t\stackrel{{\rm def}}{=\!\!\!=} \sum\limits  _{i=1}^\infty \mathbf{1} \left\{\sum\limits  _{k=1}^i \xi_k\le t\right\}
\end{equation}
is named {\it generalized renewal process}.
\hfill\ensuremath{\triangleright}
\end{definition}

For this quasi-renewal process the r.v.'s $B_t\stackrel{{\rm def}}{=\!\!\!=}   t -S_{N_t}$ and $ W_t\stackrel{{\rm def}}{=\!\!\!=}  S_{N_t+1} -t$ are backward renewal time and forward renewal time accordingly.
\begin{remark}\label{Wt}  %{\bf ~~~Wt ~~~}
The r.v. $W_t$ also acn be represented as \linebreak $W_t=\min\{\zeta(B_t), \eta(B_t)\}$, where $\zeta(B_t)$ has the distribution function \linebreak $\mathbf{P} \{\zeta(B_t)\le s\} =\displaystyle \frac{\mathbf{P} \{\zeta_{N_t+1}-B_t\le s\}}{1-\mathbf{P} \{\zeta_{N_t+1}> B_t\}} $, and $\eta(B_t)$ has the distribution function $\mathbf{P} \{\eta(B_t)\le s\} =\displaystyle \frac{\mathbf{P} \{\eta_{N_t+1}-B_t\le s\}}{1-\mathbf{P} \{\eta_{N_t+1}> B_t\}}$. \hfill\ensuremath{\triangleright}
\end{remark}

\begin{remark}\label{r00} %{\bf ~~~ r00 ~~~}
The assumption 1 holds:
the r.v.'s $\xi_i$ and $\xi_j$ are dependent, and this dependency is ``weak'' dependency in some sens.

Also the intensity of renewal time of considered quasi-renewal process is a sum of two independent processes: one of them is a classic renewal process (with renewal times $\zeta_i$) and quasi-renewal process with (dependent) renewal times $\eta_i$.

Such a process describes some process that has some minimal intensity and additional increase in intensity due to the influence of some external factors.
\hfill\ensuremath{\triangleright}
\end{remark}

\begin{remark}\label{r0}% {\bf ~~~ r0 ~~~}
The assumptions 3 and 4 hold:
$\mathbf{E} \,\xi_i>0,\qquad \mbox{Var}\,\xi_i^2>0.$
%<RASPISAT'!!!>
\hfill\ensuremath{\triangleright}
\end{remark}

\begin{remark}\label{r1} %{\bf ~~~ r1 ~~~}
The assumptions 1 and 2 hold: %$\mathbf{P} \{\xi_i\le x\}\ge \mathbf{P} \{\zeta_i\le x\}$, and
$$F_i(t)=\mathbf{P} \{\xi_i\le t\}=1-\displaystyle \exp\left(\int\limits _0^t {-\lambda_i(s)}\,\mathrm{d} \, s\right) \ge 1- \displaystyle \exp\left(\int\limits _0^t {-Q(s)}\,\mathrm{d} \, s\right)$$

$ \Rightarrow \;\;\exists\; \mathbf{E} \,\xi_i^2 < \infty.$
\hfill\ensuremath{\triangleright}
\end{remark}

\begin{remark}\label{r2} %{\bf ~~~ r2 ~~~}
The assumption 5 reports that the renewal process under study is a delay renewal process, and a delay time does not exceed $T$.
\hfill\ensuremath{\triangleright}
\end{remark}

\begin{theorem}[Generalized Lorden's inequality -- see \cite{kalzv}] \label{thm1} %{\bf ~~~ thm1~~~}
If the conditions 1--4 are satisfied, then for the process (\ref{def}) the inequality

\begin{equation}\label{osn1}%\mbox{\bf ~~~ osn1}
\mathbf{E} \,B_t\le \mathbf{E} \, \zeta + \frac{\mathbf{E} \,\zeta^2}{2\mathbf{E} \, \xi}=\Xi(=\Xi(1)),
\end{equation}
is true, where
$ \mathbf{E} \, \zeta= \displaystyle \int\limits _0^\infty x \,\mathrm{d} \, \Phi(x);\quad
\mathbf{E} \, \xi=\displaystyle  \int\limits _0^\infty x \,\mathrm{d} \, G(x),$ and
\\
$
G(x)= 1- \displaystyle\exp \left(-\int\limits _0^x Q(t)\,\mathrm{d} \, t\right) , \mbox{ and } \Phi(x)= 1- \exp \left(-\int\limits _0^x \varphi(t)\,\mathrm{d} \, t\right) .
 $\hfill\ensuremath{\triangleright}
\end{theorem}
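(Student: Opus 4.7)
The plan is to adapt Lorden's classical proof to the quasi-renewal setting by systematically using the i.i.d.\ dominator $\zeta_j$ wherever the minimum structure $\xi_j=\min\{\zeta_j,\eta_j\}$ permits. As a starting point, I would invoke the elementary pathwise bound
$$
B_t\le\xi_{N_t+1}=\min\{\zeta_{N_t+1},\eta_{N_t+1}\}\le\zeta_{N_t+1},
$$
which transfers the task of bounding $\mathbf{E}\,B_t$ to that of estimating the $\zeta$-variable attached to the inter-renewal interval containing $t$.

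Next I would invoke the inspection-paradox viewpoint. In the stationary regime, conditionally on the interval length $\xi_{N_t+1}=x$, the backward renewal time $B_t$ is uniformly distributed on $[0,x]$, so $\mathbf{E}[B_t\mid\xi_{N_t+1}]=\xi_{N_t+1}/2$; and $\xi_{N_t+1}$ itself is size-biased, with density proportional to $x f_{\xi}(x)$. Using the pointwise inequality $\xi_{N_t+1}\le\zeta_{N_t+1}$ in the numerator and the stochastic-order estimate $\xi_j\ge_{st}\xi$ (with $\xi\sim G$) in the denominator -- the latter being a direct consequence of Lemma~\ref{min}, Assumption~2, and the intensity representation~(\ref{inten}) -- one obtains the stationary bound $\mathbf{E}\,B_t^{\mathrm{stat}}\le\mathbf{E}\,\zeta^2/(2\mathbf{E}\,\xi)$.

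To transfer the stationary estimate to an arbitrary time $t$, I would then add the transient correction $\mathbf{E}\,\zeta$, which bounds the contribution of the first (delay) inter-renewal interval and, more generally, the discrepancy between $\mathbf{E}\,B_t$ and its stationary limit by the natural scale of one inter-renewal time; this is what accounts for the additive term in~(\ref{osn1}). Combined with the previous step one gets the claimed $\Xi=\mathbf{E}\,\zeta+\mathbf{E}\,\zeta^2/(2\mathbf{E}\,\xi)$.

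The main obstacle I anticipate lies in the length-biasing step: because the intensities $\mu_j(s)$ may depend on $j$, the variables $\xi_j$ are not i.i.d.\ and the classical size-biased identity for $\xi_{N_t+1}$ does not apply directly. The remedy I would pursue is either to condition on the entire $\eta$-trajectory and exploit that only the $\zeta_j$'s must be i.i.d.\ in the length-biased identity, or to couple the quasi-renewal process with an i.i.d.\ minorant renewal process of intensity $Q$ via the uniform stochastic domination $\xi_j\ge_{st}\xi$ and then apply the classical Lorden inequality to the minorant. The extra additive $\mathbf{E}\,\zeta$ is then exactly the price paid for passing from the i.i.d.\ case to this generality.
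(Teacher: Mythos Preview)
The paper does not prove Theorem~\ref{thm1}; it is quoted from the external reference~\cite{kalzv}, and the only trace of the argument in the present paper is Remark~\ref{W}, which records the intermediate bound $\mathbf{E}(B_t)^N\le \mathbf{E}\zeta^N+\mathbf{E}\zeta^{N+1}/((N+1)\mathbf{E}\xi)$. Hence there is no in-paper proof to compare your proposal against, and any comparison would have to be with~\cite{kalzv} itself.

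On its own merits, your outline identifies the right ingredients --- the pathwise domination $B_t\le\xi_{N_t+1}\le\zeta_{N_t+1}$, the stochastic bounds $\xi_j\le_{st}\zeta$ and $\xi_j\ge_{st}\xi\sim G$ coming from Assumptions~1--2 --- but the central step is not yet a proof. The size-biased identity $\mathbf{E}[B_t\mid\xi_{N_t+1}]=\xi_{N_t+1}/2$ with $\xi_{N_t+1}$ having density proportional to $x f_\xi(x)$ is a \emph{stationary i.i.d.} fact; for the quasi-renewal process there is no stationary version a priori (this is precisely what the paper is working towards), and the $\xi_j$ are neither independent nor identically distributed, so invoking the inspection paradox is circular. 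Your first proposed remedy, conditioning on the $\eta$-trajectory, leaves you with independent but still non-identically distributed $\xi_j$, so the size-biased formula still does not apply. Your second remedy, coupling with a $G$-minorant, goes in the wrong direction: since $\xi_j\ge_{st}\xi\sim G$, the minorant has \emph{more} renewals and hence a \emph{smaller} backward renewal time, which does not bound $B_t$ from above. Finally, the ``transient correction $\mathbf{E}\zeta$'' is asserted rather than derived; you would need to explain concretely why the non-stationary excess over the stationary bound is controlled by a single $\zeta$-mean uniformly in~$t$.

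A more robust route, closer in spirit to Lorden's original argument and to the shape of the bound in Remark~\ref{W}, is to write $\mathbf{E}(B_t)^N\le\sum_{n\ge0}\mathbf{E}\big[\zeta_{n+1}^N\mathbf{1}(S_n\le t<S_n+\zeta_{n+1})\big]$ (using $\xi_{n+1}\le\zeta_{n+1}$ and the independence of $\zeta_{n+1}$ from $S_n$), recognise this as $\int_0^t g_N(t-s)\,dU(s)$ with $g_N(u)=\mathbf{E}[\zeta^N\mathbf{1}(\zeta>u)]$ and $U$ the renewal measure of the $\xi$-process, and then bound the increments of $U$ via the uniform hazard bound $\lambda_j\le Q$, which is what produces the denominator $\mathbf{E}\xi$ from $G$. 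This makes the two terms $\mathbf{E}\zeta^N$ and $\mathbf{E}\zeta^{N+1}/((N+1)\mathbf{E}\xi)$ appear naturally, without appealing to a stationary regime that has not yet been established.
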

\begin{remark}\label{W} %{\bf ~~~ W ~~~}
In the proof of this Theorem, there is an intermediate result:

If $\mathbf{E} \, \zeta^k<\infty$, then for $N\in(0;k-1]$,
$$
\displaystyle \mathbf{E} \,(B_t)^N\le
=\mathbf{E} \,\zeta^N+\frac{\mathbf{E} \, \zeta^{N+1}}{(N+1)\mathbf{E} \, \xi}\stackrel{{\rm def}}{=\!\!\!=}  \Xi(N). \eqno {\triangleright}
$$
\end{remark}
%\begin{remark}\label{exp} %{\bf ~~~ exp ~~~}
%If there exists $\mathbf{E}  e^{\alpha \eta}<\infty$ for some $\alpha>0$ then there exists $\mathbf{E}  e^{\alpha \xi}<\infty$ as $\mathbf{P} \{\eta\le s\}\ge \mathbf{P} \{\xi\le s\}$ or $\eta\succ \xi$ (see \cite{sht}).

%Therefore, $\mathbf{E} \,\exp(\alpha B_t)\stackrel{{\rm def}}{=\!\!\!=}  \Xi_{\exp}(\alpha)< \infty$.\hfill\ensuremath{\triangleright}
%\end{remark}

\section{Considered generalized Markov-modulated Poisson process}
Consider the process $X_t\stackrel{{\rm def}}{=\!\!\!=}  (X_t^{(1)}, X_t^{(2)}, X_t^{(3)}, \ldots X_t^{(m)})$, where the processes $X_t^{(i)}$, $i=1,2, \ldots m$ are generalized renewal processes $X_t^{(i)}$, $i=1,2, \ldots m$.

For these processes, the backward renewal times $B_t^{(i)}$ and forward renewal times $W_t^{(i)}$ are for processes $X_t^{(i)}$ accordingly defined in the Remark \ref{Wt}.

At the time $t$, the state of the process $X^{(i)}_t$ is $x^{(i)}_t=t-\displaystyle \sum\limits  _{j=1}^{N_t^{(i)}}\xi_i^{(j)}(=B_t^{(i)}$ -- elapsed renewal time or backward renewal time of process $ X_t^{(i)})$  -- see Fig. \ref{fig1}.

The intensities of the processes $X_t^{(i)}$, $i=1,2, \ldots m$ are $\varphi(B_t^{(i)})+\mu_i(B_t^{(i)})$.

Renewal times of qusi-renewal process $X_t^{(i)}$ are $\xi_i^{(j)}=\min\{\zeta_i^{(j)},\eta_i^{(j)}\}$, where $\zeta_i^{(j)}$ has intensity $\varphi(x^{(i)}_t)$, and $\eta_i^{(j)}$ has intensity $\mu_i(X_t)$.

So, the ``full'' intensity of $X_t^{(i)}$ (which corresponds to $\zeta_i^{(j)}$ in Assumption 1) is $\lambda_i(X_t)=\varphi(x^{(i)}_t)+\mu_i(X_t)$.
``Additional'' intensity (which corresponds to $\eta_i^{(j)}$ in Assumption 1) is $\mu_i(t)=\mu(x^{(1)}_t, x^{(2)}_t, x^{(3)}_t, \ldots x^{(m)}_t)$.

For simplicity, here we replace inequalities $\lambda_i(x)\ge \varphi_i(x)$ and $\lambda_i(x)+\mu_i(x)\le Q_i(x)$ by homogeneous inequalities $\lambda_i(x)\ge \varphi(x)$ and $\lambda_i(x)+\mu_i(x)\le Q(x)$.

%Obviously, under the conditions of the paragraph ``{\it Denotations and assumptions}'', this process is a ``classic'' renewal process.

The processes $X^{(i)}_t$ are dependent, and the intensity of $X^{(i)}_t$ depends on the state of the processes $X^{(j)}_t$, $j\neq i$.

The process $X_t\stackrel{{\rm def}}{=\!\!\!=}  (X^{(1)}_t, X^{(2)}_t, X^{(3)}_t, \ldots X^{(m)}_t)$ is not regenerative in general.

It can be considered as a kind of variation on the topic of the Markov modulated Poisson process (see, e.g., \cite{mmpp1,mmpp2})

Emphasize, that for simplicity, here we suppose, that the functions $\varphi(s)$ and $Q(s)$ are the same for all processes.

~

\noindent {\bf Additional assumption}
\begin{enumerate}
\item[6.] For all $t>0$, $\displaystyle \int\limits _0^s Q(s)\,\mathrm{d} \, s<\infty$, and there exists non-negative function $q(s)$ such that $\mu_i(s)=\lambda_i(s)-\varphi(s)\ge q(s)$.
%\item[7.] For all $t\ge 0$, $\varphi(t)> 0$ a.s. (this condition is required only to simplify further considerations).
\end{enumerate}
\begin{remark}\label{distr} %{\bf ~~~ distr ~~~}
 From Assumptions 1 and 6 we have the formulae for distributions of $\zeta$ and $\eta$.
$$\mathbf{P} \{\zeta_i\le s\}=\Phi(s)> 0\mbox{ for }s>T,$$
and
$$\mathbf{P} \{\eta_i\le s\}=\widetilde  G(s)\stackrel{{\rm def}}{=\!\!\!=}  1- \exp \left(-\int\limits _0^s (Q(t)-\varphi(t))\,\mathrm{d} \, t\right);$$
$$\mathbf{P} \{\eta_i> s\}=1-\widetilde  G(s)\ge\exp \left(-\int\limits _0^s Q(t)\,\mathrm{d} \, t\right)=1-G(s).$$
Note, that $\mathbf{P} \{\zeta<\infty\}=1$, and $\mathbf{P} \{\eta<\infty\} \le 1$.

Assumptions 1--6 hold: the probability densities of the r.v.'s $\xi_j^{(i)}$ can be bounded as:
\begin{equation}\label{ost}%\mbox{\bf ~~~ ost}
\begin{array}{l}
F_j(x)=\mathbf{P} \{\xi_j^{(i)}\le x\}=1-\displaystyle \exp\left( -\int\limits _0^x (\lambda_j(s)+\mu_j(s))\,\mathrm{d} \, s\right)\le
\\
\hspace{6cm}\le \displaystyle  1-\exp\left( -\int\limits _0^x \varphi(s)\,\mathrm{d} \, s\right);
\\
f_j(x)=\displaystyle  \frac{\,\mathrm{d} \, F_j(x)}{\,\mathrm{d} \, x}\ge \exp\left( -\int\limits _0^x Q(s)\,\mathrm{d} \, s\right)\varphi(s)>0\mbox{ a.s. for $t>T$}.
\end{array}
\end{equation}

For forward renewal times $W_t^{(i)}(a)$ of the processes $X_t^{(i)}$ given $B_t^{(i)}>a$ have the probability densities:
\begin{equation}\label{oc1}
f_t^{(i)}(x,a)\ge \varphi(a+x)\exp\left( -\int\limits _a^{x+a} Q(s)\,\mathrm{d} \, s\right)\stackrel{{\rm def}}{=\!\!\!=} f(a,x)>0\mbox{ a.s. for $t>T$},
\end{equation}
see the Assumption 5.
\hfill\ensuremath{\triangleright}
\end{remark}

%So, there exists the computable function $A(\Theta)>0$ such that for all fixed $t$,

$\mathbf{P} \{ \zeta_t^{(i)}>\eta_t^{(i)}| B_t^{(i)}<\Theta\}\ge A(\Theta)$.

\subsection{Ergodicity of the multi-dimensional process  $X_t$}\label{ergo}
Here we give {\it only} the schema for the proof of the ergodicity of the multi-dimensional process $X_t$.

Suppose, $X_0=(0,0,0, \ldots, 0)$.

%Let us fix some number $M>0$, and consider the times $\theta_1=M$.

Put $\theta_0=0$.
This is markov moment of $X_t$.

At these times, the backward renewal time $B_{\theta_1}^{(i)}$  satisfies the inequality (\ref{osn1}).
Put constant $\Theta>\Xi$.
By Markov inequality, \\
$$\displaystyle  \mathbf{P} \{B^{(i)}_{{\theta_1} }\ge\Theta\}\le \frac{\Xi}{\Theta},\mbox{ and  }\displaystyle \mathbf{P} \{B ^{(i)}_{\theta_1} <\Theta\}\ge 1- \frac{\Xi}{\Theta}=\pi_0(\Theta).
$$

These bounds are uniform for all the processes $X_t^{(i)}$, they don't depend on the number $i$ of the process $X_t^{(i)}$.

So, at the time ${\theta_\ell} $ for all the processes $X_t^{(i)}$, $B^{(i)}_{\theta_1} <\Theta$, %and $\zeta_{\theta_\ell} ^{(i)}>\eta_{\theta_\ell}  ^{(i)}$
with probability greater than $p_0\stackrel{{\rm def}}{=\!\!\!=}  (\pi_0(\Theta))^m$.

At this time, the forward renewal times $W_{\theta_1}^{(i)}$ of the processes $X_t^{(i)}$ are the residual times of $\zeta^{(i)}$ given the elapsed time is less then $\Theta$ for all $\zeta^{(i)}$.

Now, we can use the generalization of the Basic Coupling Lemma, and we can create (in some probability space) the prolongation of the processes $X_t^{(i)}$ by such a way, that the next renewal times of both these processes coincide with probability greater then
$$
\pi_1\stackrel{{\rm def}}{=\!\!\!=}  \inf\limits _{a_j\in[0;\Theta], j=1,2,\ldots, m}\int\limits _0^\infty \min\{f(x,a_j)\}\,\mathrm{d} \, x>0\quad - \mbox{ see (\ref{oc1}).}
$$

If at the time $\theta_1=\theta_0+\max\{\xi_1^{(j)}\}=\theta_0+\max\{W_{\theta_0}^{(j)}\}$ all the processes $X_{\theta_0}^{(i)}=0$, we say that $\theta_1$ is the first regeneration point of the constructed version for $X_t$.

In this case $X_{\theta_1}=X_{\theta_0}$, and we repeat the pricedure descripted above.
Otherwise consider $W_{\theta_1}^{(i)}$.
Again with probability greater then $\pi_1$ at the time $\theta_2\stackrel{{\rm def}}{=\!\!\!=} \theta_1+\max\limits_i \{W_{\theta_1}^{(i)}\}$ constructed versions of the processes $X_t^{(i)}$ coincide.

Note that $\mathbf E\, (\theta_2-\theta_1)\le \Xi$ (Theorem \ref{osn}).

Anew we use the generalization  of the Basic Coupling Lemma, and we can create (in some probability space) the prolongation of the processes $X_t^{(i)}$ by such a way, that the next renewal times of both these processes coincide with probability greater then $\pi_1$, and so on.

This operation is repeated at every times ${\theta_\ell} \stackrel{{\rm def}}{=\!\!\!=} \theta_{\ell-1}+\max\limits_i \{W_{\theta_{\ell-1}}^{(i)}\}$.

By this way, on some probability space we construct the new process \linebreak$\widetilde  X_t\stackrel{{\rm def}}{=\!\!\!=}  (\widetilde  X_t^{(1)}, \widetilde  X_t^{(2)}, \widetilde  X_t^{(3)}, \ldots ,\widetilde  X_t^{(m)})$, such that the marginal distributions of $\widetilde  X_t$ and $X_t$ are equal, and after any time ${\theta_\ell} $ the process $\widetilde  X_t$  hits to the state \linebreak$(0,0,0,\ldots,0)$ with probability greater then $p_0\pi_1$.

The time between the hits to this zero-state are distributed as geometrical sum of constants $M$ and the finishing residual time of the last period.
So, it has the finite expectation, and the process $\widetilde  X_t$ is regenerative.
Therefore, it has a limit probability distribution.

In this situation, it is natural to call the process $X_t$ as ``quasi-regenerative process''.

{\it In fact, here we used the scheme of successful coupling, but in a slightly different way.}
\begin{remark}\label{limit} %{\bf ~~~ limit ~~~}
The limit distribution of the process $\widetilde  X_t$ can be estimated by the same schema as for a ``classic'' renewal process (see \cite{Smith}):
$$
\mathbf{P} \{\widetilde  X_t^{(i)}>s\}\le \Psi(s)\stackrel{{\rm def}}{=\!\!\!=}  \displaystyle  \frac{\displaystyle \int\limits _0^s (1-\Phi(u))\,\mathrm{d} \, u} {\displaystyle \int\limits _0^\infty (1-G(u))\,\mathrm{d} \, u}. \eqno{\triangleright}
$$
\end{remark}

\subsection{About polynomial convergence of the process $ X_t$ }
Now, consider two independent version of multi-dimensional process $X_t$: $X_t$ and $\widehat  X_t=({\widehat  X_t^{(1)}}, {\widehat  X_t^{(2)}}, {\widehat  X_t^{(3)}}, \ldots , {\widehat  X_t^{(m)}})$.

For simplicity, $x_0=0$, $y_0=0$.
The second process has an arbitrary initial state: ${\widehat  x^{(i)}_0}=a_i$.

So, the residual times ${\widehat  \xi_{1}^{(i)}(a_i)}$ of the processes $\widehat X^{(i)}_t$  have the densities greater then $\displaystyle \varphi(a_i+x)\exp\left( -\int\limits _{a_i}^{x+a_i} Q(s)\,\mathrm{d} \, s\right)$  (see (\ref{den1}).

So, if there exists finite $\displaystyle  \int\limits _0^\infty x^k\exp\left( -\int\limits _0^{x} Q(s)\,\mathrm{d} \, s\right)\,\mathrm{d} \, x<\infty$, then there exist finite expectations of  $\mathbf{E} \,(\widehat  \xi^{(i)}_{1}(a_i))^{k-1}$.

After the time $\theta_0=S_0(a_1, a_2, \ldots, a_m)\stackrel{{\rm def}}{=\!\!\!=}  \displaystyle \max\limits_{i=1,2,3,\ldots,m}\{\widehat \xi_{1}^{(i)}(a_i)\}\le \displaystyle \sum\limits  _{i=1}^m \widehat \xi_{N_t+1}^{(i)}(a_i)$, we can use the generalization of Lorden's inequality.

Denote $\theta_\ell \stackrel{{\rm def}}{=\!\!\!=} \theta_{\ell-1}+\max\limits_{i=1,2,3,\ldots,m}\{ W_{\theta_{\ell-1}}^{(i)}, \widehat W_{\theta_{\ell-1}}^{(i)}\}$.

%Let $n_S^{(i)}$ be the first renewal of $X_t^{(i)}$ after time $S_0$.

At the time $\theta_0$, consider probability
$$\Pi\stackrel{{\rm def}}{=\!\!\!=} \mathbf{P} \{B_{\theta_\ell} ^{(i)}<\Theta,\;\widehat  B_{\theta_\ell} ^{(i)}<\Theta, \; i=1,2,3,\ldots ,m\} \;\;-
$$
here the letters with caps refer to the process $\widehat  X_t$.

Similarly to the calculations in the Section \ref{ergo} we have $\Pi\ge (p_0)^2$.

Now the generalized Basic Coupling Lemma gives the probability of coincidence of the prolonged versions of both processes after any time ${\theta_\ell} $ -- this probability  $\ge \pi_1$.

Thus, after any time ${\theta_0} $ the construction of successful coupling leads to the moment of coincidence of both processes in zero-state $(0,0,0,\ldots,0)$ with probability greater then $(p_0)^2\pi_1$ -- this is a {\it coupling epoch} for the processes $X_t$ and $\widehat  X_t$.
Otherwise, we use the generalized Basic Coupling Lemma at the time $\theta_1$, and so on.

So, after any time $\theta_\ell$ with probability $\pi\ge (p_0)^2\pi_1$ (see Lemma \ref{lem4}) we can continue the versions of all processes $X_t$ and $\widehat X_t$ hit to the zero-states $(0,0,0,\ldots 0)$ contemporaneously is a coupling epoch of created on some probability space versions of $X_t$ and $\widehat X_t$ -- see Subsection \ref{sc}.

Let $\mathscr {E}_\ell$ be the event $\{{coupling }\;\; {epoch  } \;\;{happened  }\;\; { right  } \;\;\mbox{after  } \;\;{time } \;\;\theta_\ell\}$.

So, the coupling epoch
$$\tau(a_1, a_2, \ldots, a_m) \le S_0 +\sum\limits_{j=1}^{\nu+1}(\max\limits_i \{W_{\theta_j}^{(i)}\}) +\{\xi^{(1)}_{N_{\theta_\ell}} | \mathscr {E}_\ell \},$$
with probability $\mathbf P \,(\mathscr  E _\ell)$, and  $\mathbf{P} \{\nu-1>n\}\ge (1-\pi)^n$; denote $\widetilde W_j\stackrel{{\rm def}}{=\!\!\!=}  \max\limits_i \{W_{\theta_j}^{(i)}\}$.
$\mathbf E\,(\widetilde W_j)^N\le \Xi(N)$ for $N\le k-1$ -- see Remark \ref{W}.

(Note that $\mathbf E\{ \varsigma|B\}\times \mathbf P\{B\}\le \mathbf E\,\varsigma$ for all r.v. $\varsigma$ and event $B$.)

Thus, using the Jensen's inequality in the form
$$
\left(\sum\limits  _{i=1}^j r_i\right)^ N =j^ N  \left({\sum\limits  _{i=1}^j r_i}\left/{j}\right.\right)^ N \le m^{ N -1}\sum\limits  _{i=1}^j (r_i)^ N ,\quad j>1,\; j\in \mathbf N,
$$
so, for coupling epoch we have for $N\le k-1$:
\begin{eqnarray*}
(\tau(a_1, a_2, \ldots, a_m))^N\le (m+2)^{N-1}\times \hspace{4cm}
\\
\times \left( \displaystyle \sum\limits  _{r=1}^m (\widehat\xi_i^{(r)}(a_i))^N +\left(\sum\limits  _{r=1}^{\nu+1}\widetilde W_r\right)^N +(\{\xi^{(1)}_{N_{\theta_\ell}} | \mathscr {E}_\ell \})^N \right)
\end{eqnarray*}
with probability  $\mathbf P\{\mathscr {E}_\ell\}$, and the last term is a conditional r.v., provided that the coupling epoch is on the end of this r.v.

So,
\begin{eqnarray*}
\mathbf E\,(\tau(a_1, a_2, \ldots, a_m))^N\le (m+2)^{N-1}\times \hspace{6cm}
\\
\times \left(\displaystyle \sum\limits  _{r=1}^m \mathbf E\, (\widehat\xi_i^{(r)}(a_i))^N + \mathbf E\,(\nu+1)^N\times  \Xi(N)^N+ \mathbf E\, (\xi^{(1)}_{N_{\theta_\ell}})^N\right)\stackrel{{\rm def}}{=\!\!\!=}
\\
\stackrel{{\rm def}}{=\!\!\!=}\mathrm{T}(a_1, a_2, \ldots, a_m)_N ,
\end{eqnarray*}
where $\nu$ is geometric r.v. with parameter $\pi$; here $\mathbf E\,(\xi^{(1)}_{N_{\theta_\ell}})^N\le \mathbf E\,(\zeta_1)^N $.

Now, integrating $\mathrm{T}(a_1, a_2, \ldots, a_m)_N$ over a stationary measure bounded in (\ref{limit}) and considering Remark \ref{limit}, we can find an upper bound $K( N )$ in formula (\ref{poly}) for initial zero-state of the process $X_t$.
\begin{remark}
For arbitrary initial state of the process $X_t$, the same schema can be used. \hfill\ensuremath{\triangleright}
\end{remark}
\begin{remark}
The value of $K( N )$ depends on the value $\Theta$.
It can be optimized. \hfill\ensuremath{\triangleright}
\end{remark}
\section{Application for one reliability system}
Consider two elements reliability system.
Let the intensities of the failure and of the repair of the main element depend only on the state of this element.
And let the intensities of the failure and of the repair of the reserve element depend on the full state of this reliability system.

The state of the main element is $(i,x)$, where $i$ is an indicator of work/failure state, and $x$ is the elapsed time in this state.
So, the intensities of failure and of repair are $\lambda(i,x)$.

The state of the reserve element also is is $(j,y)$, where $j$ is an indicator of work/failure state, and $y$ is the elapsed time in this state.
The intensities of failure and of repair of this reserve element are $\widehat  \lambda(j,y)$.

We suppose that these intensities satisfy the assumptions 1--7.

In this case, the full periods (work+repaire) of the main element are i.i.d. random variables.
The full period (work+repaire) of the reserve element are the periods of embedded generalized renewal process.

Thus, we can prove the ergodicity of this reliability system, and calculate the upper bounds for the convergence rate of its distribution.

If we take into account the peculiarities of the distributions of work and repair, and consider not only the times of work beginnings of the main element, these estimates can be improved.

\section{Conclusion}
Here we give only the schema of use of the generalized Lorden's inequality for multi-dimensional process and one application to the reliability theory.
This is only beginning of development of this approach to the finding of strong bounds for various stochastic models.

\section*{Acknowledgments}
The author  is grateful to E.~Yu.~Kalimulina for the great help in preparing this paper.
The work is supported by RFBR, project No~20-01-00575 A.

\end{document}